\documentclass[a4paper]{article}
\usepackage[english]{babel}
\usepackage[cp1251]{inputenc}
\usepackage[colorlinks,linkcolor=blue,citecolor=blue,%
urlcolor=blue,filecolor=blue,pagecolor=blue]{hyperref}
\usepackage{latexsym,amsfonts,amssymb,amsthm,amsmath,amsbsy,graphicx}
\pdfoutput=1
\setlength{\textwidth}{158mm}\setlength{\textheight}{222mm}
\hoffset=-10mm\voffset=-3mm

\newtheorem{thm}{Theorem}
\newtheorem{lem}{Lemma}
\newtheorem{nas}{Corollary}
\begin{document}
\thispagestyle{myheadings} \markboth{Ukrainian Mathematical Journal,
Vol. 62, No. 1, 2010}{Ukrainian Mathematical Journal,
Vol. 62, No. 1, 2010}
\bigskip
 {\noindent \bf \large  BEHAVIOR OF AN ALMOST SEMICONTINUOUS POISSON PROCESS
ON A MARKOV CHAIN UPON ATTAINMENT OF A LEVEL}\footnotemark[1]
\footnotetext{Translated from
Ukrains’kyi Matematychnyi Zhurnal,
Vol. 62, No. 1, pp. 81–89, January,
2010.  Original article submit\-ted April 4, 2008;
revision submitted February 19, 2009.
 This reprint differs from the original in
pagination and typographic detail.}

\bigskip
\bigskip
 { \bf Ievgen~Karnaukh
 \footnotetext{Kyiv National University of Trade
 and Economics, Kyiv, Ukraine.\phantom{\;}
 \href{mailto:ievgen.karnaukh@gmail.com}{ievgen.karnaukh@gmail.com}
 }}\hskip 10 cm UDC 519.21
\begin{center}
\bigskip
\begin{quotation}
\noindent  {\small We consider the almost
semi-continuous processes defined on a finite Markov chain.
The representation of the moment generating functions
for the absolute maximum after achievement positive level and for
the recovery time are obtained. Modified processes with two-step
rate of negative jumps are investigated.}
\end{quotation}\footnotetext{\bigskip \hskip 2 cm 0041–5995/10/6201–0087 \;\;
\copyright\; 2010\;\; Springer Science+Business Media, Inc.}
\end{center}

\section{Introduction}
In the present paper, we continue the investigations of almost
 semicontinuous processes defined on finite
Markov chains originated in~\cite{KarnaukhUMZ,KarnaukhTBIMS}.
In the first section, we consider analogs of the functionals
 studied in~\cite{Gusak2007}
 (Sec. 6.3) for the scalar case.
 In the second section, we study overjump functionals for a
 modified almost semi-
continuous process playing the role of an analog of a modified
 semicontinuous process with drift whose variations
depend on the attained level (see, e.g.,~\cite{Asmussen},
 Chap. VII and~\cite{Jasiulewicz}; for the scalar case,
 see~\cite{Bratiychuk}).

 Let $x(t)$ be a finite irreducible Markov chain with the set of
  states
  $E'=\{1, \ldots, m\}$ and an infinitesimal matrix $\mathbf{Q}$.
  A process $\xi (t)$ is defined as follows: $\xi (0)=0$; for
 $x(t)=k$, $k=1,...,m$,
the increments $\xi (t)$ coincide with the increments of the process
$$
 \xi_k (t)=\sum_{n\leq \varepsilon  _k(t)}\xi^k _n-
 \sum_{n\leq \varepsilon'_k(t)}{\xi '_n} ^k
,
$$
where $\varepsilon'_k(t)$ and $\varepsilon_k(t)$ are
Poisson processes with the rates $\lambda _k^1$ and $\lambda _k^2$,
respectively. ${\xi
'_n} ^k$ and $\xi_n ^k$ are independent positive random variables. Moreover,
 ${\xi
'_n} ^k$ have exponential distributions with parameters $c_k$, whereas $\xi_n
^k$ have absolutely continuous distributions with finite expectations $m_k$.
In this case, $Z(t)=\left\{\xi
(t),x(t)\right\}$ is an almost lower semicontinuous process on a Markov chain
(see~\cite[p. 562]{KarnaukhUMZ}) with cumulant
\begin{equation}\label{eq1}
\boldsymbol{\Psi }(\alpha )=\boldsymbol{\Lambda
}{\mathbf{F}}_0(0)\left(\mathbf{C}
  \left(\mathbf{C}+\imath\alpha \mathbf{I}\right)^{-1}-\mathbf{I} \right)
  +\int_0^{\infty}\left(e^{\imath\alpha x}-\mathbf{I} \right)\boldsymbol{\Pi }(dx)
  +\mathbf{Q},
\end{equation}
where $\boldsymbol{\Lambda }=\|\delta _{kr}(\lambda _k^1+\lambda _k^2
)\|$, $\mathbf{C}=\|\delta _{kr}c_k\|$, ${\mathbf{F}}_0(0)=\|\delta
_{kr}\lambda _k^1/(\lambda _k^1+\lambda _k^2 )\|$,
$\boldsymbol{\Pi}(dx)=\boldsymbol{\Lambda
}\overline{\mathbf{F}}_0(0)d\mathbf{F}^1_0(x)$,
$\overline{\mathbf{F}}_0(0)=\mathbf{I}-\overline{\mathbf{F}}_0(0)$ and
$\mathbf{F}^1_0(x)=\|\delta _{kr}\mathrm{P}\left\{\xi
^k_n<x\right\}\|$, $x<0$.

By
$$
\xi ^{\pm }(t)=\sup\limits_{0\leq u\leq t}(\inf)\xi (u),  \xi ^{\pm
}=\sup
   \limits_{0\leq u\leq \infty}(\inf)\xi (u),
 \overline{\xi }(t)=\xi (t)-\xi ^+(t),
$$
we denote the extrema of the process $\xi(t)$. The
overjump functionals are specified as follows:
$$\tau^+(x)=\inf\{t:\xi (t)>x\}, \gamma ^+(x)=\xi (\tau
^+(x))-x, \gamma _+(x)=x-\xi (\tau ^+(x)-0),
%\gamma_x^+=\gamma^+(x)+\gamma_+(x) . \;\;
x\geq 0; $$
$$ \tau^-(x)=\inf\{t:\xi (t)<x\} ,  x\leq 0; \tau ^-(x)=0, x>0.
$$

Let $\theta _s$ be an exponentially distributed random variable
with parameter
$s>0$ independent of $Z(t)$. The distributions of extrema and the
corresponding atomic probabilities are defined as follows:
 $$ \mathbf{P}_\pm(s,x)=\left \| \mathrm{P}\left\{\xi
^\pm(\theta_s)<x, x(\theta_s)=r/x(0)=k\right\} \right \|
=\mathbf{P}\left\{\xi ^\pm(\theta_s)<x\right\}, x>0;$$
$$
\mathbf{P}^-(s,x)=\mathbf{P}\left\{\overline{\xi}(\theta_s)<x\right\},
\;x<0;$$
 $\mathbf{p}_{\pm}(s)=\mathbf{P}\left\{\xi
^{\pm}(\theta_s)=0\right\}\mathbf{P}^{-1}_s,$
$\mathbf{P}_s=s\left(s\mathbf{I}-\mathbf{Q}\right)^{-1}$, $
\mathbf{p}^-(s)=\mathbf{P}^{-1}_s\mathbf{P}\left\{\overline{\xi}
(\theta_s)=0\right\}$,  and $\mathbf{q}^-(s)=\mathbf{I}-\mathbf{p}^-(s)$.

Further, we denote $\mathbf{R}_-(s)=\mathbf{C}\mathbf{p}_-(s)$ and
$\mathbf{R}^-(s)=\mathbf{p}^-(s)\mathbf{C}$.
Thus, for $x\leq 0$, we can write (see~\cite{KarnaukhTBIMS})
\begin{equation}\label{eq2}\begin{aligned}
\mathbf{P}_{-}(s,x) =\mathbf{E}\,\left[e^{-s\tau ^-(x)},\tau
^-(x)<\infty \right]\mathbf{P}_s= \mathbf{q}_-(s)e^{\mathbf{R}_-(s)
x}\mathbf{P}_s,\\
 \mathbf{P}^{-}(s,x) =\mathbf{P}_s
e^{\mathbf{R}^-(s) x}\mathbf{q}^-(s).\quad \quad \quad \quad \quad
\quad
\end{aligned}
\end{equation}
Since
$$\lim_{x\rightarrow
-\infty}\mathbf{P}^-(s,x)=\lim_{x\rightarrow
-\infty}\mathbf{P}_-(s,x)=0,$$
we  conclude that the spectra of the matrices
$\mathbf{R}_-(s)$ and $\mathbf{R}^-(s)$ $\left(\sigma \left(
\mathbf{R}_-(s)\right) \text{ and } \sigma\left(
\mathbf{R}^-(s)\right) \right)$  are formed by positive elements.

The following assertion for overjump functionals
is obtained from~\cite[p. 48]{KarnaukhTBIMS}:
\begin{lem} For the process $Z(t)$ with cumulant~\eqref{eq1},
\begin{multline}\label{eq3}
 \mathbf{f}_s(dx,dy/u)=\mathbf{E}\!\left[e^{-s\tau ^+(u)},
 \gamma _+(u)\in dx, \gamma ^+(u)\in dy, \tau
 ^+(u)<\infty\right]\!=\\
\shoveleft\qquad\qquad \quad  =s^{-1}d_x\mathbf{P}_+(s,u-x)
\mathbf{p}^-(s)\boldsymbol{\Pi}(dy+x)I\{x<u\}+\\
  +s^{-1}\int\limits_{0\vee(u-x)}^{u}
  d\mathbf{P}_+(s,z)\mathbf{R}^-(s) e^{\mathbf{R}^-(s)(u-x-z)}
  \mathbf{q}^-(s)\boldsymbol{\Pi}(dx+y)dy.
\end{multline}
\begin{multline}\label{eq4}
 \mathbf{g}_s(dy/u)=\mathbf{E}\!\left[e^{-s\tau ^+(u)},
  \gamma ^+(u)\in dy, \tau ^+(u)<\infty\right]=
  s^{-1}\int_{0}^{u}d\mathbf{P}_+(s,z)\times\\
  \times
  \biggl(\mathbf{p}^-(s)\boldsymbol{\Pi}(dy+u-z)%+\\
  +\mathbf{R}^-(s)\!\int_{u-z}^{\infty}\!\!
   e^{\mathbf{R}^-(s)(u-x-z)}
  \mathbf{q}^-(s)\boldsymbol{\Pi}(dx+y)dy\biggr).
\end{multline}
\begin{multline}\label{eq5}
 \mathbf{g}(dy/u)=\lim_{s\rightarrow 0}\mathbf{g}_s(dy/u)=\\
 =
 \int_{0}^{u}\!d\mathbf{M}_+(z)
 \!\biggl(\!\boldsymbol{\Pi}(dy+u-z)
  +\mathbf{C}\!\int_{u-z}^{\infty}\!\!
   e^{\mathbf{R}^-(0)(u-x-z)}
  \left(\mathbf{I}-\mathbf{p}^-(0)
  \right)\boldsymbol{\Pi}(dx+y)dy\biggr),
\end{multline}
where $\mathbf{M}_+(x)$: $
  \int_{0}^{\infty}e^{\imath\alpha x}d\mathbf{M}_+(x)=
  -\boldsymbol{\Psi}^{-1}(\alpha)
\left(\mathbf{C}+\imath\alpha \mathbf{I}
\right)^{-1}\left(\mathbf{C}\mathbf{p}^-(0) +i\alpha
\mathbf{I}\right).$
\end{lem}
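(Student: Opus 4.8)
The plan is to derive~\eqref{eq3} by decomposing the first-passage event $\{\tau^+(u)<\infty\}$ according to the position of the pre-passage running supremum and the drawdown at the instant of the overshooting jump, and then to obtain~\eqref{eq4} and~\eqref{eq5} by marginalization and by passage to the limit $s\to0$. Since $\xi(t)$ is a compound Poisson process modulated by $x(t)$ with no continuous component, the level $u$ is always crossed by a positive jump; I write $u-x=\xi(\tau^+(u)-0)$ for the launch level (undershoot $\gamma_+(u)=x$) and $u+y=\xi(\tau^+(u))$ for the landing level (overshoot $\gamma^+(u)=y$), so that the crossing jump contributes $\boldsymbol{\Pi}(d(x+y))$. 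First I would classify the crossing by the value of the drawdown $\overline{\xi}(\tau^+(u)-0)=(u-x)-\xi^+(\tau^+(u)-0)\le0$, i.e. by whether the launch point is itself a new record of $\xi$ or lies strictly below the past maximum.

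On the event that the launch point is a record (drawdown $=0$, so the supremum equals $u-x>0$, whence $x<u$), the strong Markov property at the last ascending ladder epoch together with the killing by $\theta_s$ shows that the supremum is distributed on $\{u-x>0\}$ according to $d_x\mathbf{P}_+(s,u-x)$, the phase is resolved by the atom $\mathbf{p}^-(s)=\mathbf{P}_s^{-1}\mathbf{P}\{\overline{\xi}(\theta_s)=0\}$ of the drawdown at zero, and the crossing jump contributes $\boldsymbol{\Pi}(dy+x)$; this yields the first summand of~\eqref{eq3}. On the complementary event the process first attains a record at some level $z\in[(u-x)\vee0,\,u]$ and then descends to the launch level $u-x<z$ without setting a new record before jumping over $u$. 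Here the memorylessness of the exponential negative jumps is decisive: by~\eqref{eq2} the drawdown $\overline{\xi}(\theta_s)$ is governed on the negative axis by the matrix-exponential law $e^{\mathbf{R}^-(s)x}\mathbf{q}^-(s)$, so the descent of magnitude $z-(u-x)$ contributes the differential kernel $\mathbf{R}^-(s)e^{\mathbf{R}^-(s)(u-x-z)}\mathbf{q}^-(s)$ (with $u-x-z\le0$, consistent with $\sigma(\mathbf{R}^-(s))>0$). Integrating the record level $z$ against $d\mathbf{P}_+(s,z)$ and appending the crossing jump $\boldsymbol{\Pi}(dx+y)$ gives the second summand, and collecting both contributions together with the factor $s^{-1}$ produced by conditioning on the exponential horizon $\theta_s$ (as in~\eqref{eq2}) produces~\eqref{eq3}.

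Formula~\eqref{eq4} then follows by integrating~\eqref{eq3} over the undershoot $x\in[0,u]$: the substitution $z=u-x$ turns the record summand into $\mathbf{p}^-(s)\boldsymbol{\Pi}(dy+u-z)$, while interchanging the order of integration in the second summand (integrating $x$ first for fixed record level $z$) produces the inner integral $\int_{u-z}^{\infty}e^{\mathbf{R}^-(s)(u-x-z)}\mathbf{q}^-(s)\boldsymbol{\Pi}(dx+y)\,dy$. For~\eqref{eq5} I would group the factor $\mathbf{p}^-(s)$ with $s^{-1}d\mathbf{P}_+(s,z)$ (using $\mathbf{R}^-(s)=\mathbf{p}^-(s)\mathbf{C}$ in the second summand) and pass to the limit: the renewal-type convergence $s^{-1}d\mathbf{P}_+(s,z)\mathbf{p}^-(s)\Rightarrow d\mathbf{M}_+(z)$, whose transform is exactly the one recorded in the statement, carries the $\mathbf{p}^-(s)$-weighted measure to $\mathbf{M}_+$, leaving the prefactor $\mathbf{C}$, the kernel $e^{\mathbf{R}^-(0)(\cdot)}$, and the trailing factor $\mathbf{q}^-(0)=\mathbf{I}-\mathbf{p}^-(0)$; dominated convergence, legitimate since $\sigma(\mathbf{R}^-(s))>0$ uniformly near $s=0$ bounds the exponential kernel, justifies the interchange of limit and integral.

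I expect the main obstacle to be the rigorous factorization underlying the second summand --- namely, that at the crossing instant the record level $z$, the subsequent drawdown $(u-x)-z$, and the overshooting jump are conditionally independent with exactly the laws $d\mathbf{P}_+(s,z)$, $\mathbf{R}^-(s)e^{\mathbf{R}^-(s)(u-x-z)}\mathbf{q}^-(s)$, and $\boldsymbol{\Pi}(dx+y)$, with the phase of the chain threaded correctly through the left and right matrix factors. This is a Wiener--Hopf-type statement whose scalar analog is classical; on the Markov chain it relies on the dual (time-reversal) reading of $\mathbf{R}^-(s)$ and $\mathbf{p}^-(s)$ supplied by~\eqref{eq2} and on careful bookkeeping of which matrices act on the left and which on the right. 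Given the identities~\eqref{eq2} and the general overjump representation of~\cite{KarnaukhTBIMS}, the remaining manipulations are routine.
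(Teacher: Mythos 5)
The paper does not prove this lemma at all: it is imported verbatim from \cite[p.~48]{KarnaukhTBIMS}, with only the remark that $d\mathbf{M}_+(x)=\lim_{s\to0}s^{-1}d\mathbf{P}_+(s,x)\mathbf{p}^-(s)$. Your sketch --- compensation over the overshooting jump $\boldsymbol{\Pi}(dx+y)$, factorization of $(\xi^+(\theta_s),\overline{\xi}(\theta_s))$ with the atom $\mathbf{p}^-(s)$ versus the matrix-exponential drawdown density $\mathbf{R}^-(s)e^{\mathbf{R}^-(s)w}\mathbf{q}^-(s)$ from~\eqref{eq2}, then marginalization and the limit $s\to0$ using $\mathbf{R}^-(s)=\mathbf{p}^-(s)\mathbf{C}$ --- is exactly the standard argument underlying that reference, and it is correct as far as it goes, with the factorization step you flag being precisely what the cited source supplies.
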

Note that $d\mathbf{M}_+(x)=\lim\limits_{s\rightarrow
0}s^{-1}d\mathbf{P}_+(s,x)\mathbf{p}^-(s)$ and the matrices
$\mathbf{p}_-(0)$ and $\mathbf{p}^-(0)$ satisfies the equations
$$ \left(\boldsymbol{\Lambda }-
  \mathbf{Q}\right)\left(\mathbf{I}-\mathbf{p}_-(0) \right)=
  \boldsymbol{\Lambda }{\mathbf{F}}_0(0)+\int^{\infty}_{0}\boldsymbol{\Pi}(dz)
  \left(\mathbf{I}-\mathbf{p}_-(0)\right)e^{-\mathbf{C}\mathbf{p}_-(0)z},$$
  $$ \left(\mathbf{I}-\mathbf{p}^-(0) \right)\left(\boldsymbol{\Lambda }-
  \mathbf{Q}\right)=
  \boldsymbol{\Lambda }{\mathbf{F}}_0(0)+\int^{\infty}_{0}
  e^{-\mathbf{p}^-(0)\mathbf{C}z}\left(\mathbf{I}-\mathbf{p}^-(0)\right)\boldsymbol{\Pi}(dz),$$
respectively.

\section{Red period}

In the present section, we consider functionals connected with the behavior of
$\xi(t)$ upon attainment of a
positive level. Denote
$$z^+(u)=\sup_{\tau ^+(u)\leq t<\infty}\xi (t)-u,\;
\tau '(u)=\inf\{t>\tau^+ (u),\xi (t)<u\},$$
$$
T'(u)=
  \begin{cases}
    \tau'(u) -\tau^+ (u), & \tau ^+(u)<\infty, \\
    \infty, & \tau^+ (u)=\infty.
  \end{cases}$$

It is worth noting that the process  $Z(t)$  can be regarded
 as a surplus risk process with stochastic function
of premiums (the values of premiums are exponentially
distributed) in a Markov environment and the functionals
$z^+(u)$, $\tau '(u)$, $T'(u)$  can be regarded as the total
deficit after ruin, recovery time, and "red period"{}, respectively
(see~\cite{Rolsky}).
\begin{thm}   For the process $Z(t)$ with cumulant~\eqref{eq1}
\begin{equation}\label{eq6}
\mathbf{P}\left\{z^+(u)<x,\tau^ +(u)<\infty\right\}=
\int_{0}^{x}\mathbf{g}(dy/u)\mathbf{P}\left\{\xi ^+<x-y\right\}.
\end{equation}
\begin{multline}\label{eq7}
  s\mathbf{E}\left[e^{-s\tau '(u)},\tau '(u)<\infty\right]
  =\int_{0}^{u}d\mathbf{P}_+(s,x)\mathbf{p}^-(s)\biggl(
  \int_{u-x}^{\infty}\boldsymbol{\Pi}(dz)
  \mathbf{q}_-(s)
  e^{\mathbf{R}_-(s)(u-x-z)}+\\
  +\mathbf{C}\int_{-\infty}^{0}e^{\mathbf{R}^-(s) y}
  \mathbf{q}^-(s)\int_{u-x-y}^{\infty}\boldsymbol{\Pi}(dz)
  \mathbf{q}_-(s)e^{\mathbf{R}_-(s) (u-x-y-z)}dz\biggr),
\end{multline}
\begin{multline}\label{eq8}
  \mathbf{E}\!\left[e^{-sT'(u)},T'(u)<\infty\right]
=\int_{0}^{u}d\mathbf{M}_+(x)\biggl(
  \int_{0}^{\infty}\boldsymbol{\Pi}(dy+u-x)
  \mathbf{q}_-(s)e^{-\mathbf{R}_-(s) y}+\\
  +\mathbf{C}\int_{u-x}^{\infty}e^{\mathbf{R}^-(0)(u-x-z)}
  \mathbf{q}^-(0)
  \int_{0}^{\infty}\boldsymbol{\Pi}(dy+z)\mathbf{q}_-(s)e^{-\mathbf{R}_-(s) y}dz\biggr).
\end{multline}
\end{thm}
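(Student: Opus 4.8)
The plan is to derive each of the three formulas by decomposing the trajectory of $\xi(t)$ at the first-passage time $\tau^+(u)$ and then applying the strong Markov property of the pair $Z(t)=\{\xi(t),x(t)\}$, reducing everything to the already-established overjump densities $\mathbf{g}(dy/u)$, $\mathbf{g}_s(dy/u)$ and the extremum representations~\eqref{eq2}. For \eqref{eq6}, first I would observe that $z^+(u)=\sup_{t\ge\tau^+(u)}\xi(t)-u$ is, conditionally on the state and the overjump $\gamma^+(u)=y$ at ruin, distributed as $y$ plus an independent copy of the global supremum $\xi^+$ started afresh from the post-jump state. Thus $\{z^+(u)<x\}$ requires $\gamma^+(u)=y<x$ and then the future supremum to stay below $x-y$; integrating the overjump law $\mathbf{g}(dy/u)$ against $\mathbf{P}\{\xi^+<x-y\}$ gives the convolution in~\eqref{eq6} directly. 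Here I would use that $\mathbf{g}(dy/u)$ already encodes the level-crossing behavior including the Markov state at $\tau^+(u)$, so the matrix product lines up correctly.

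For \eqref{eq7}, the recovery-time Laplace transform, the key step is to split the first return below $u$ after overshooting. After $\tau^+(u)$ the process sits at level $u+\gamma^+(u)$ and must first come back down to level $u$; on the event $\tau^+(u)<\infty$ I would condition on $\gamma^+(u)=y-u$ (so the height above $u$ is $z=u-x$ in the integration variable used) and write $\tau'(u)-\tau^+(u)$ as the first-passage time of a freshly restarted process from that height down to level $0$. The transform of such a downward passage is exactly $\mathbf{q}_-(s)e^{\mathbf{R}_-(s)\cdot}$ from~\eqref{eq2}; the two terms in~\eqref{eq7} correspond to whether the descent is achieved by a large negative jump (first term, integrating $\boldsymbol{\Pi}(dz)$ against the downward passage kernel) or by the exponential downward drift component, which contributes the extra factor $\mathbf{C}\int_{-\infty}^0 e^{\mathbf{R}^-(s)y}\mathbf{q}^-(s)(\cdots)\,dy$ arising from the almost-semicontinuous creeping through the exponentially distributed decrease. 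Assembling these against $d\mathbf{P}_+(s,x)\mathbf{p}^-(s)$ yields~\eqref{eq7}.

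Formula \eqref{eq8} for the ``red period'' $T'(u)$ is structurally the same decomposition as~\eqref{eq7}, but measured from $\tau^+(u)$ rather than from time $0$; hence I would replace the factor $s^{-1}d\mathbf{P}_+(s,x)\mathbf{p}^-(s)$ by its $s\to0$ limit $d\mathbf{M}_+(x)$ (using the identity $d\mathbf{M}_+(x)=\lim_{s\to0}s^{-1}d\mathbf{P}_+(s,x)\mathbf{p}^-(s)$ stated after the Lemma), while keeping the downward-passage kernel at parameter $s$ because $T'(u)$ times only the excursion above $u$. The overshoot structure now feeds through $\boldsymbol{\Pi}(dy+u-x)$ and the creeping term with $\mathbf{R}^-(0)$, $\mathbf{q}^-(0)$, matching~\eqref{eq8}.

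The main obstacle I anticipate is the careful bookkeeping of the Markov-state matrices and the correct placement of the left/right resolvent kernels $\mathbf{R}_-(s)$ versus $\mathbf{R}^-(s)$ and the atoms $\mathbf{p}_-(s)$, $\mathbf{p}^-(s)$, $\mathbf{q}_-(s)$, $\mathbf{q}^-(s)$. Because the process lives on a chain, the upward-passage data acts on the left and the subsequent downward-passage data on the right, and the exponential (creeping) descent inserts a one-sided resolvent between them; getting these noncommuting factors in the right order — and correctly separating the jump-induced crossing from the creeping crossing in the second term of each formula — is where the real care lies, rather than in any single analytic estimate.
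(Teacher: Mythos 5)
Your overall skeleton --- decompose the trajectory at $\tau^+(u)$, invoke the strong Markov property of $Z(t)$, and compose the overjump law with the downward-passage transform from~\eqref{eq2} --- is exactly the paper's route, and your accounts of~\eqref{eq6} and~\eqref{eq8} match the paper's proof: for~\eqref{eq6} the paper uses that $z^+(u)$ is stochastically equivalent to $y+\xi^+$ given $\gamma^+(u)\in dy$, and for~\eqref{eq8} it conditions on $\gamma^+(u)\in dy$ and the state at $\tau^+(u)$, notes that $T'(u)$ is then equivalent to the passage time to level $-y$, and substitutes~\eqref{eq5} and~\eqref{eq2}, which is precisely your replacement of $s^{-1}d\mathbf{P}_+(s,x)\mathbf{p}^-(s)$ by $d\mathbf{M}_+(x)$ while keeping the descent kernel at parameter $s$.

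The genuine gap is in your reading of~\eqref{eq7}. You attribute its two terms to two different mechanisms of \emph{descent} after ruin (``large negative jump'' versus ``creeping through the exponentially distributed decrease''), and this reverses the direction of semicontinuity: $\boldsymbol{\Pi}$ is the measure of the \emph{positive} jumps (the claims), the process is almost \emph{lower} semicontinuous, so the level $u$ is always crossed upward by a jump from $\boldsymbol{\Pi}$ and never by creeping, while the return below $u$ is governed in \emph{both} terms by the same kernel $\mathbf{q}_-(s)e^{\mathbf{R}_-(s)\cdot}$. The two terms in fact come from the decomposition of the pre-ruin position relative to the running maximum. The paper writes
$$
\mathbf{E}\left[e^{-s\tau'(u)},\tau'(u)<\infty\right]=\int_{0}^{u}d\mathbf{P}_+(s,x)\int_{-\infty}^{0}\mathbf{P}_s^{-1}\mathbf{P}^-(s,y)\int_{u-x-y}^{\infty}\boldsymbol{\Pi}(dz)\,\mathbf{E}\left[e^{-s\tau^-(u-x-y-z)},\tau^-(u-x-y-z)<\infty\right],
$$
that is: prior maximum $x$, deficit $y\le 0$ of the position from that maximum at the instant of the fatal upward jump, jump size $z$, and only then the descent. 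By~\eqref{eq2} the measure $\mathbf{P}_s^{-1}\mathbf{P}^-(s,y)$ splits into the atom $\mathbf{p}^-(s)$ at $y=0$ (first term of~\eqref{eq7}) and the density $\mathbf{p}^-(s)\mathbf{C}e^{\mathbf{R}^-(s)y}\mathbf{q}^-(s)\,dy$ on $(-\infty,0)$ (second term); equivalently one composes $\mathbf{g}_s(dy/u)$ from~\eqref{eq4}, whose two terms are precisely these, with $\mathbf{q}_-(s)e^{-\mathbf{R}_-(s)y}$. Carried out as you describe it, the bookkeeping would not reproduce~\eqref{eq7}.
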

\begin{proof}  In view of the fact that, under the condition  $\gamma
^+(u)\in dy,\tau ^+(u)<\infty$, the functional is $z^+(u)$
stochastically equivalent to $y+\xi^+$,  we find
$$\mathbf{P}\left\{z^+(u)<x,\tau ^+(u)<\infty\right\}
=\int_{0}^{x}\mathbf{P}\left\{\gamma ^+(u)\in dy,\tau
^+(u)<\infty\right\}\mathbf{P}\left\{y+\xi^+<x\right\}.$$
In exactly the same way as in the proof of Theorem 5.1
 in~\cite{Gusak},
for the moment generating function of the time to recovery, we deduce
\begin{multline*}
\mathbf{E}\left [  e^{-s\tau '(u)},\tau
'(u)<\infty\right]=\int_{0}^{u}d\mathbf{P}_+(s,x)
\int_{-\infty}^{0}\mathbf{P}^{-1}_s\mathbf{P}^-(s,y)\times\\
\times \int_{u-x-y}^{\infty}\boldsymbol{\Pi}(dz)\mathbf{E}\left [
e^{-s\tau^-(u-x-y-z)},\tau ^-(u-x-y-z)<\infty\right].
\end{multline*}
Combining this relation with~\eqref{eq2},
 we obtain~\eqref{eq7}.
 By using the strict Markov property, we get
\begin{multline*}
  \mathrm{E}\,\left[ e^{-sT'(u)}, T'(u)<\infty, x(T'(u))=r/x(0)=k\right]=\\
\shoveleft  =\!\!\int_{0}^{\infty}\!\!\!\mathrm{E}\!\left[ e^{-sT'(u)},
  T'(u)\!<\infty, \gamma ^+(u)\in dy, x(T'(u))=r/x(0)=k\right]=\\
  =\sum_{j=1}^{m}\int_{0}^{\infty}\!\mathrm{E}\left[ e^{-sT'(u)},
  T'(u)<\infty,\gamma ^+(u)\in dy,
  x(T'(u))=r, x(\tau ^+(u))=j/x(0)=k\right]\\
=\sum_{j=1}^{m}\int_{0}^{\infty}
\mathrm{P}\left\{x(\tau ^+(u))=j,\gamma ^+(u)\in dy,\tau ^+(u)<\infty/x(0)=k\right\}\times \\
%\shoveright{
\times\mathrm{E}\!\!\left[ e^{-sT'(u)},
  T'(u)<\infty, x(T'(u))=r/\gamma ^+(u)\in dy,x(\tau ^+(u))=j,
  \tau ^+(u)<\infty, x(0)=k\!\right]
%}
\end{multline*}
\begin{multline*}
={\sum_{j=1}^{m}\int_{0}^{\infty}\mathrm{E}\,\left[ e^{-s\tau
^-(-y)},
  \tau ^-(-y)<\infty, x(\tau ^-(-y))=r/x(0)=j\right]\times} \\
  \times \mathrm{P}\left\{x(\tau ^+(u))=j,\gamma ^+(x)\in dy,\tau ^+(u)<\infty/x(0)=k\right\}.
\end{multline*}
In deducing this equality, we have used the fact that, under the condition $\gamma ^+(u)\in dy,
 \tau ^+(u)<\infty$, the functional $T'(u)$ is stochastically equivalent
 to the time of attainment of the level $-y$.
 In the matrix form, we can write
\begin{multline*}
  \mathbf{E}\,\left[e^{-sT'(u)},T'(u)<\infty,\tau ^+(u)<\infty \right]=\\
  =\int_{0}^{\infty}
  \mathbf{P}\left\{\gamma ^+(u)\in dy, \tau ^+(u)<\infty\right\}
  \mathbf{E}\,\left[e^{-s\tau ^-(-y)},\tau ^-(-y)<\infty \right].
\end{multline*}
By using~\eqref{eq2} and~\eqref{eq5}, we establish equality~\eqref{eq8}.
\end{proof}

\section{Modified Process}
In the present section, in addition to the results obtained for the overjump functionals, we use the relations for
two-limit functionals. By
$$\tau (u,b)=\{t>0:\xi (t)\notin (u-b,u)\}$$
 we denote the time of exit from the interval $(u-b,u)$.
Further, we consider the events specifying the times of exit
through the upper and lower boundaries of the interval:
$$ A_+(u)=\left\{\omega : \xi (\tau
(u,b))\geq u\right\}\text{ and }  A_-(u)=\left\{\omega : \xi (\tau
(u,b))\leq u-b\right\},$$ and the corresponding overjumps
\begin{gather*} \gamma _b^+(u)=\xi(\tau (u,b))-u, \gamma
^b_+(u)=u-\xi(\tau
(u,b)-0) \text{ on } A_+(u);\\
 \gamma _b^-(u)=(u-b)-\xi(\tau (u,b)), \gamma
^b_-(u)=\xi(\tau (u,b)-0)-(u-b) \text{ on } A_-(u).
\end{gather*}
It follows from the results presented in~\cite[p.559]{KarnaukhUMZ} that
\begin{equation}\label{eq9}
\mathbf{E}\,\left [  e^{-s \tau (u,b)}, \gamma _b^-(u)\in dy,
A_-(u)\right ]
 =\mathbf{E}\,\left [  e^{-s \tau (u,b)},  A_-(u)\right
 ]\mathbf{C}e^{-\mathbf{C}y}dy=\mathbf{B}_b(s,u)\mathbf{C}e^{-\mathbf{C}y}dy,
\end{equation}
\begin{multline*} \mathbf{f}^+_{b,s}(dx,dy/u) =\mathbf{E}\,\left [
e^{-s \tau (u,b)}, \gamma
_+^b(u)\in dx, \gamma _b^+(u)\in dy, A_+(u)\right ]=\\
 =\!\biggl\|\mathrm{P}
   \bigl\{u-\xi (\theta_s)\in dx,\tau (u,b)>\theta_s, x(\theta
   _s)=r/x(0)=k\!\bigr\}\!\biggr\|\boldsymbol{\Pi}(dy+x)I\{0<x<b\}=\\
   =d_x\mathbf{H}_s(b,u,u-x)\boldsymbol{\Pi}(dy+x)I\{0<x<b\}.
\end{multline*}
Note that the representations for $\mathbf{B}_b(s,u)$  and
$d_x\mathbf{H}_s(b,u,x)$ were obtained in~\cite{KarnaukhUMZ}
 for almost upper semicontinuous processes.
 In this case, one can use the fact that if $\{\xi(t),x(t)\}$
 is an almost upper semicontinuous process, then $\{-\xi(t),x(t)\}$
is an almost lower semicontinuous process.

\par We now determine the modified process $\xi_{a,b}(t)$, $0<a\leq
b<\infty$.  Assume that the rates of exponentially
distributed negative jumps
$\xi_{a,b}(t)$ depend on the threshold levels $a$
and $b$ (see~\cite{Bratiychuk}). In the risk theory, this
process has the following interpretation: As soon as the reserve
 of an insurance company attains a certain level,
the company may decrease the value of the premium to attract
 additional clients.  Therefore, the distribution of
 the values of premiums contains a parameter
$\tilde{\mathbf{C}}=\mathbf{C}(r)$,
 if the reserve of the company is equal to
$r$. Assume that $\tilde{\mathbf{C}}$ takes only two values
 $\mathbf{C}$ and $\mathbf{C}_*$ equal to
 the initial and lowered values of the premium,
 respectively, and in
addition, that the transition between these values
occurs on passing through an inert zone $(a,b)$.

The increments of the process $\xi_{a,b}(t)$ coincide with
the increments of the process
$\xi(t)$ (with intensities $\mathbf{C}$) between the last crossing of the level
 $u-a$ from below and the next crossing of the level $u-b$ from above.
The increments of $\xi_{a,b}(t)$ coincide with $\xi_*(t)$ (with intensities
$\mathbf{C}_*$) between the last crossing of the level
$u-b$ from above and the subsequent crossing of the level  $u-a$ from below.
In the notation of moment generating functions we use the symbol $*$,
which corresponds to the process  $\xi_*(t)$.  Further, if $x(t)=k$, then
\begin{multline*}
d\xi_{a,b}(t)=d\xi_k(t)I\{0\leq t\leq \tau ^{-}(u-b)\}+d\xi
_{k}^*(t)I\{\tau ^-(u-b)<t\leq \tau _b^a(u)\}+\\
+d\xi _{a,b}(t-\tau _b^a(u))I\{t>\tau _b^a(u)\},
\end{multline*}
where $\tau _b^a(u)=\inf\{t>\tau ^-(u-b):\xi _{a,b}(t)\geq u-a\}$.

\begin{figure}
\noindent\includegraphics[scale=1.3]{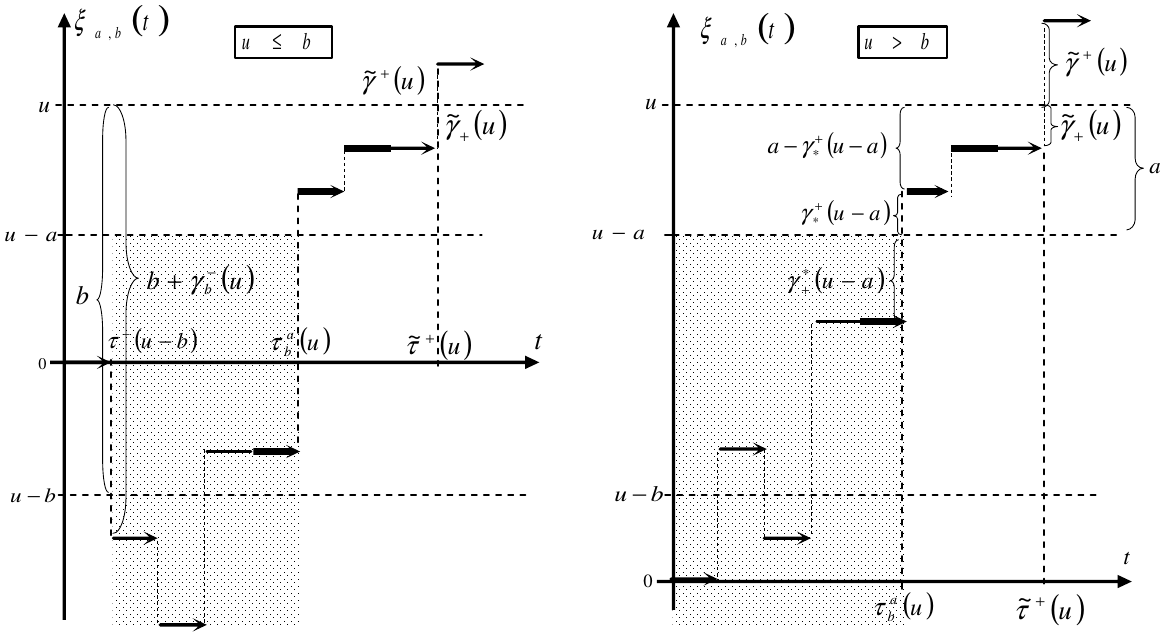}
\caption{Modified Risk Process}\label{pic1}
\end{figure}

By $\tilde{\tau}^+(u), \tilde{\gamma}_+(u)$, $
\tilde{\gamma}^+(u)$  we denote the overjump functionals for the modified
process $\xi _{a,b}(t)$ (see Figure~\ref{pic1}).  We also denote
$$
\mathbf{f}^{a,b}_s(dx,dy/u) =\mathbf{E}\left [
e^{-s\tilde{\tau}^+(u)}, \tilde{\gamma}_+(u)\in dx,
\tilde{\gamma}^+(u)\in dy, \tilde{\tau}^+(u)<\infty\right ].
$$
Then the Gerber–Shiu function can be defined as follows (see~\cite{Gerber})
$$\boldsymbol{\Phi}^{a,b} _s(u)=\int_{0}^{\infty}
\int_{0}^{\infty}w(x,y)\mathbf{f}^{a,b}_s(dx,dy/u),$$ where $w(x,y)$,
$x,y>0$  is a nonnegative function (penalty). If the parameter $s$
is regarded as the force of interest, then $\boldsymbol{\Phi}^{a,b} _s(u)$
can be regarded as a discounted expected penalty at the time to ruin.

Assume that the process $e^{u-\xi _{a,b}(t)}$ describes the price
of a stock whose variations have the form of random jumps.
 We now consider a perpetual American put option with strike price $K$.
The payoff at time $t$  is equal to $(K-e^{u-\xi_{a,b}(t)})_+$.
For the scalar case, the optimal strategy is as follows
$$\tau _\beta
=\inf\{t>0:e^{u-\xi_{a,b}(t)}<e^{\beta}\},$$ where exercise boundary
$\beta$: $e^{\beta }\leq \min(e^{u},K)$. Assume that the market is risk neutral.
 Then the price of an option is defined as the expected discounted payoff
 $$\mathbf{E}\left [  e^{-s \tau _{\beta}}(K-e^{u-\xi
_{a,b}(\tau _{\beta })})_+\right ]$$ or, in view of the fact that $\tau _{\beta
}\dot{=}\tilde{\tau}^+(u-\beta )$, as follows
$$\mathbf{E}\left [  e^{-s
\tilde{\tau}^+(u-\beta )}(K-e^{\beta -\tilde{\gamma}^+(u-\beta
)})_+\right ].$$ Therefore,
 $\boldsymbol{\Phi}^{a,b} _s(u-\beta )$ with
$w(x,y)=(K-e^{\beta-y})_+$ can also be regarded as the price of perpetual American
put option ~\cite[p.12]{GerberI}.

 \begin{thm}  For the modified process$\{\xi_{a,b}(t),x(t)\}$

\noindent{\rm{1)}} if $\, 0<u\leq b$, then
\begin{equation}\label{eq10}
\mathbf{f}^{a,b}_s(dx,dy/u)=\mathbf{f}^+_{b,s}(dx,dy/u)
+\mathbf{B}_b(s,u)\int_{0}^{\infty}\mathbf{C}e^{-\mathbf{C}z}
\mathbf{f}^{a,b}_s(dx,dy/z+b)dz;
\end{equation}
{\rm{2)}} if $\,b<u$, then
\begin{multline}\label{eq11}
\mathbf{f}^{a,b}_s(dx,dy/u)=\mathbf{f}^*_s(dx-a,dy+a/u-a)I\{x>a\}+\\
+\int_{0}^{a}
\mathbf{g}^*_s(dz/u-a)\biggl(\mathbf{f}^+_{b,s}(dx,dy/a-z)
+\mathbf{B}_b(s,a-z)\int_{0}^{\infty}\mathbf{C}e^{-\mathbf{C}v}
\mathbf{f}^{a,b}_s(dx,dy/v+b)dv\biggr),
\end{multline}
where
\begin{multline}\label{eq12}
\left(\!\mathbf{I}-\int_{0}^{\infty}\!\!\mathbf{C}e^{-\mathbf{C}z}
\int_{0}^{a}\mathbf{g}^*_s(dv/b-a+z) \mathbf{B}_b(s,a-v)dz\!\right)
\int_{0}^{\infty}\!\!\mathbf{C}e^{-\mathbf{C}z}
\mathbf{f}^{a,b}_s(dx,dy/z+b)dz=\\
=\!\!\!\int_{0}^{\infty}\!\!\!\mathbf{C}e^{-\mathbf{C}z}\!
\biggl(\!\mathbf{f}^*_s(dx-a,dy+a/b-a+z)I\{x>a\} +\!\int_{0}^{a}
\!\!\mathbf{g}^*_s(dv/b-a+z)
\mathbf{f}^+_{b,s}(dx,dy/a-v)\!\biggr)dz.
\end{multline}
\end{thm}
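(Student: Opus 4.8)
The plan is to obtain the three relations by combining the strong Markov property with the translation-invariant, self-similar construction of $\xi_{a,b}(t)$, reducing every restart to the two-boundary functionals $\mathbf{f}^+_{b,s}$ and $\mathbf{B}_b$ recorded in~\eqref{eq9} and to the overjump functionals $\mathbf{f}^*_s,\mathbf{g}^*_s$ of the $*$-process. The key observation is that when the target level lies at height $v$, reaching a position $p$ restarts a fresh modified process aimed at height $v-p$, and the regime then in force (intensities $\mathbf{C}$ above the inert zone, $\mathbf{C}_*$ below it) is exactly the one prescribed for that fresh process; this is what will let me treat~\eqref{eq10}--\eqref{eq12} as one closed system.

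For case~1 ($0<u\leq b$) I would use that here $u-b\leq 0$, so the origin lies in $(u-b,u)$ and, by the dynamics, the process runs with intensities $\mathbf{C}$ until it first leaves this interval; I split the event $\tilde\tau^+(u)<\infty$ according to this first exit. Exit through the upper boundary realizes the crossing of $u$ directly and contributes $\mathbf{f}^+_{b,s}(dx,dy/u)$. Exit through the lower boundary occurs, by~\eqref{eq9}, at level $u-b-z$ with $z$ exponential of matrix rate $\mathbf{C}$ and weight $\mathbf{B}_b(s,u)\mathbf{C}e^{-\mathbf{C}z}\,dz$; by the strong Markov property and self-similarity the continuation is a fresh modified process aimed at height $z+b$ (which falls under case~2), contributing $\mathbf{f}^{a,b}_s(dx,dy/z+b)$. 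Adding the two pieces gives~\eqref{eq10}.

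For case~2 ($b<u$) I would use that now $u-b>0$, so $\tau^-(u-b)=0$ and the process starts in the $*$-regime, climbing toward $u-a$ with intensities $\mathbf{C}_*$. I split the first upcrossing of $u-a$. If the $*$-jump that crosses $u-a$ also overshoots $u$, then $u$ is attained at that same jump; writing $\tilde\gamma_+(u)=a+\gamma^*_+(u-a)$ and $\tilde\gamma^+(u)=\gamma^+_*(u-a)-a$ converts the $*$-overjump law into the first term $\mathbf{f}^*_s(dx-a,dy+a/u-a)I\{x>a\}$. Otherwise the process reaches $u-a$ with overshoot $z\in(0,a)$, landing in $[u-a,u)$; the intensities switch back to $\mathbf{C}$ and, by self-similarity, the continuation is a fresh modified process aimed at height $a-z\in(0,b)$, which is governed by case~1. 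Integrating $\mathbf{g}^*_s(dz/u-a)$ against the expression~\eqref{eq10} for $\mathbf{f}^{a,b}_s(dx,dy/a-z)$ then produces the second term and hence~\eqref{eq11}.

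Both~\eqref{eq10} and~\eqref{eq11} contain the same unknown block $\int_0^\infty\mathbf{C}e^{-\mathbf{C}z}\mathbf{f}^{a,b}_s(dx,dy/z+b)\,dz$, whose argument $z+b$ always exceeds $b$ and is therefore given by case~2. To close the system I would substitute~\eqref{eq11} (with $u$ replaced by $z+b$) inside this integral: the two explicit terms of~\eqref{eq11} assemble into the right-hand side of~\eqref{eq12}, while the recursive term reproduces the same block premultiplied by $\int_0^\infty\mathbf{C}e^{-\mathbf{C}z}\int_0^a\mathbf{g}^*_s(dv/b-a+z)\mathbf{B}_b(s,a-v)\,dz$; transposing that coefficient to the left gives~\eqref{eq12}. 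The step I expect to be delicate is the bookkeeping of the level shifts and of the regime active at each renewal --- in particular justifying that each restart is a genuine translation-shifted copy of the modified process started in the correct regime --- together with checking that the matrix operator in parentheses in~\eqref{eq12} is invertible (equivalently, that the geometric series generated by successive lower-boundary excursions converges), which is what makes~\eqref{eq12} uniquely solvable for the unknown block.
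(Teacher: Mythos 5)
Your proposal is correct and follows essentially the same route as the paper: decomposition of the first passage at the first exit from $(u-b,u)$ for $0<u\leq b$, decomposition at the first upcrossing of $u-a$ by the $*$-process for $u>b$ (the paper's relation~\eqref{eq13}), and closing the system by applying the integral transform $\int_0^\infty\mathbf{C}e^{-\mathbf{C}z}(\cdot)|_{u=z+b}\,dz$ to~\eqref{eq11}. You in fact supply more detail than the paper, which derives~\eqref{eq10} only by reference to the analogous scalar result of Bratiychuk and Derfla and does not discuss the invertibility of the operator in~\eqref{eq12}.
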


%Proof
\begin{proof}Relation~\eqref{eq10} is an analog of the
result obtained in~\cite{Bratiychuk}.
  By using the formula of total probability and
the strict Markov property, for  $u>b$, we find
\begin{multline}\label{eq13}
 \mathbf{E}\left [  e^{-s\tilde{\tau}^+(u)},
\tilde{\gamma}_+(u)\in dx, \tilde{\gamma}^+(u)\in dy,
\tilde{\tau}^+(u)<\infty\right ]=\\
\shoveleft\quad = \mathbf{E}\biggl [ e^{-s{\tau}_{*}^+(u-a)},
{\gamma}^*_+(u-a)+a\in dx, {\gamma}_{*}^+(u-a)-a\in dy,\gamma_{*}^+(u-a)>a,
{\tau}_{*}^+(u-a)<\infty\biggr ] +\\
\shoveleft\quad \quad  +\int_{0}^{a}\mathbf{E}\left [
e^{-s{\tau}_{*}^+(u-a)}, {\gamma}_{*}^+(u-a)\in dz,
{\tau}_{*}^+(u-a)<\infty\right
]\times\\
\times\mathbf{E}\left [ e^{-s\tilde{\tau}^+(a-z)},
\tilde{\gamma}_+(a-z)\in dx, \tilde{\gamma}^+(a-z)\in dy,
\tilde{\tau}^+(a-z)<\infty\right ].
\end{multline}
This yields relation~\eqref{eq11}. Relation~\eqref{eq12}
is obtained from~\eqref{eq11} as a result of the integral transform.
\end{proof}

In the scalar case $(m=1)$ the matrix relations become
 somewhat simpler. If we set $w(x,y)=1$, then ${\Phi}^{a,b}_0(u)$
 is the ruin probability for the modified process.

 \begin{nas} For the scalar modified process $\xi_{a,b}(t)$

\noindent{\rm{1)}} if $\, 0<u\leq b$, then
\begin{equation}\label{eq14}
{\Phi}^{a,b}_0(u)=1-B_b(u)\left(1-\int_{0}^{a}g_*(dz/b-a+\theta
'_c)B_b(a-z) \right)^{-1}P^*_+(b-a+\theta '_c);
\end{equation}
{\rm{2)}} if $\,b<u$ then
\begin{multline}\label{eq15}
{\Phi}^{a,b}_0(u)=P^*_+(u-a)-\int_{0}^{a}g_*(dz/u-a)B_b(a-z)\times\\
\times \left(1-\int_{0}^{a}g_*(dz/b-a+\theta '_c)B_b(a-z)
\right)^{-1}P^*_+(b-a+\theta '_c),
\end{multline}
where
$$P^*_+(b-a+\theta '_c)=\int_{0}^{\infty}c e^{-cx}\mathrm{P}\{\xi
^+_*<b-a+x\}dx,$$
$$g_*(dz/b-a+\theta '_c)=\int_{0}^{\infty}c
e^{-cx}\mathrm{P}\{\gamma ^+_*(b-a+x)\in dz, \text{ and } \tau ^+_*(b-a+x)<\infty\}dx.$$
\end{nas}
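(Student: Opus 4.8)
The plan is to specialize Theorem 2 (relations~\eqref{eq10}--\eqref{eq12}) to the scalar case $m=1$ and set $w(x,y)=1$ and $s=0$, so that $\boldsymbol{\Phi}^{a,b}_s(u)$ reduces to the scalar ruin probability ${\Phi}^{a,b}_0(u)$. In the scalar case all the matrices collapse to scalars, which lets us commute factors freely and, crucially, evaluate the inner convolutions against the exponential density $c e^{-cx}$ explicitly. First I would observe that integrating $\mathbf{f}^{a,b}_s(dx,dy/\cdot)$ against the penalty $w\equiv1$ over $x,y\in(0,\infty)$ converts each overjump density $\mathbf{f}^+_{b,s}$, $\mathbf{f}^*_s$, $\mathbf{g}^*_s$ into its total-mass (ruin-probability) counterpart; in particular $\int_0^\infty\int_0^\infty \mathbf{f}^*_s(dx-a,dy+a/\,\cdot\,)I\{x>a\}$ becomes the scalar $P^*_+$ evaluated at the shifted argument, since the constraint $x>a$ together with the $a$-shift in the first argument precisely records the event that the process overshoots the level by more than $a$.

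Next I would carry out the same mass-integration on the self-referential term. The key structural feature of~\eqref{eq10}--\eqref{eq12} is that the unknown $\mathbf{f}^{a,b}_s(dx,dy/z+b)$ appears only through the integral $\int_0^\infty \mathbf{C}e^{-\mathbf{C}z}\mathbf{f}^{a,b}_s(dx,dy/z+b)\,dz$, i.e. averaged over an independent exponential increment of rate $c$ past the level $b$. Writing $\theta'_c$ for this $\mathrm{Exp}(c)$ variable, the mass-integral of that term is exactly $\Phi^{a,b}_0(b+\theta'_c)$-type quantities, which by the $0<u\le b$ branch of~\eqref{eq10} feed back through $B_b$ and $P^*_+$ evaluated at $b-a+\theta'_c$. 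Solving the resulting scalar fixed-point equation coming from~\eqref{eq12} for the averaged unknown gives the factor $\bigl(1-\int_0^a g_*(dz/b-a+\theta'_c)B_b(a-z)\bigr)^{-1}P^*_+(b-a+\theta'_c)$, and substituting this back into the mass-integrated~\eqref{eq10} and~\eqref{eq11} produces~\eqref{eq14} and~\eqref{eq15} respectively. The passage from survival mass to ruin probability via $\Phi=1-(\text{survival})$ accounts for the leading $1-B_b(\cdot)(\cdots)$ in~\eqref{eq14} and the $P^*_+(u-a)-\cdots$ structure in~\eqref{eq15}.

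I expect the main obstacle to be the bookkeeping of the $a$-shifts and the exponential averaging in~\eqref{eq12}, i.e. verifying that integrating the matrix equation against $\mathbf{C}e^{-\mathbf{C}z}\,dz$ and then against $w\equiv 1$ really does collapse the double integral $\int_0^\infty \mathbf{C}e^{-\mathbf{C}z}\int_0^a \mathbf{g}^*_s(dv/b-a+z)\mathbf{B}_b(s,a-v)\,dz$ into the clean scalar kernel $\int_0^a g_*(dz/b-a+\theta'_c)B_b(a-z)$, with the $z$-integration absorbed into the definition of $g_*(\cdot/b-a+\theta'_c)$ and $P^*_+(b-a+\theta'_c)$ exactly as displayed after~\eqref{eq15}. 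Once the notational identifications $\int_0^\infty c e^{-cx}(\cdots\text{ at }b-a+x)\,dx = (\cdots\text{ at }b-a+\theta'_c)$ are made, the commutativity of scalar multiplication makes the inversion of $\bigl(\mathbf{I}-\cdots\bigr)$ a genuine reciprocal, and the two formulas follow by direct substitution; so the argument is essentially a careful scalar reduction of Theorem 2 rather than a new computation.
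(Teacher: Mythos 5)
Your proposal is correct and matches the paper's (essentially implicit) derivation: the corollary is obtained exactly by specializing Theorem~2 to $m=1$, $w\equiv 1$, $s=0$, integrating the overjump densities to total masses (so that the mass of $\mathbf{f}^+_{b,0}(dx,dy/u)$ becomes $1-B_b(u)$), and solving the resulting scalar fixed-point equation from~\eqref{eq12} for the exponentially averaged unknown, which yields the factor $\bigl(1-\int_0^a g_*(dz/b-a+\theta'_c)B_b(a-z)\bigr)^{-1}P^*_+(b-a+\theta'_c)$. No further comment is needed.
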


{\bf Example.}  Assume that, for the scalar risk process, the premiums
$\xi '_n$ have exponential distributions with parameter $\tilde{c}$, whereas the claims
 $\xi _n$ obey the Erlang distribution(2):
$$\mathrm{P}\{\xi_n<x\}=\delta ^2xe^{-\delta x}, x>0.$$
It is necessary to determine the corresponding ruin probability for $u\leq b$.

According to Example 5.2~\cite{Gusak2007}, for $\mathrm{E} \xi(1)<0$ and $\mathrm{E}
\xi_*(1)<0$ we find
\begin{gather*}
\mathrm{P}\left\{\xi^+_*<u\right\}=P^*_+(u)=1-a^*_1 e^{-r^*_1 u}-a^*_2 e^{-r^*_2 u};\\
M^*_+(0+)=\frac{1}{\lambda}, dM^*_+(x)=\frac{1}{c_*|\mathrm{E}
\xi_*(1)|}dP^*_+(x), R^-(0)=p^-(0)=0,
\\  B_b(u)=\left(1-a_1 e^{-r_1 u}-a_2 e^{-r_2 u}
\right) \left(1-b_1 e^{-r_1 b}-b_2 e^{-r_2 b} \right)^{-1},
\end{gather*}
where the quantities $a_i, a^*_i$, and $b_i$ are independent of  $u$ and $b$; $r_i$ and
$r^*_i$ are positive roots of the Lundberg equation for the processes $\xi(t)$ and
$\xi _*(t)$,  respectively. By using~\eqref{eq5} and~\eqref{eq14}, we conclude
(for $u\leq b$):
\begin{equation}\label{eq16}
\mathrm{P}\left\{\tilde{\tau }^+(u)<\infty\right\}=1-\frac{(1-a_1 e^{-r_1
u}-a_2 e^{-r_2 u})(1-f^*_1 e^{-r^*_1 (b-a)}-f^*_2 e^{-r^*_2
(b-a)})}{P(u,a,b)},
\end{equation}
\begin{multline*}
P(u,a,b)=1-f_2e^{-r_1 b}-f_3e^{-r_2
b}+(g_{11}+g_{12}(b-a))e^{-\delta
(b-a)}+\\
+(g_{21}+g_{22}(b-a))e^{-\delta (b-a)-r_1
a}+(g_{31}+g_{32}(b-a))e^{-\delta (b-a)-r_2 a},
\end{multline*}
where $f_i, f^*_i$, and $g_{i,j}$ are independent of $u, a$ and $b$.

Assume that $c=1$, $c_*=4$, $\delta =20$, $\lambda _1=2$, and
$\lambda _2=1$. Then $\mathrm{E} \xi(1)=-19/10$ and $\mathrm{E}
\xi_*(1)=-2/5$. The corresponding Lundberg roots are $r_1=8$,
$r_2=95/3$ and $r^*_1=20/3$, $r^*_2=32$.

In addition, the distribution of the absolute maximum of the process $\xi _*(t)$ and
the probability of exit of the process $\xi (t)$ from the interval $(u-b,b)$
through the lower boundary are given by the formulas
$$\mathrm{P}\left\{\xi ^+_*<u\right\}=1 - \left(
        \frac{32}{57}e^{-20x/3}-\frac{9}{95}e^{-32 x} \right);$$
$$
\displaystyle B_b(u)=\mathrm{P}\left\{\xi (\tau (u,b))\leq u-b\right\}=\frac{1+\frac{49}{426}e^{-95 u/3}-
\frac{171}{355}e^{-8
u}}{1+\frac{1}{284}e^{-95b/3}-\frac{19}{355}e^{-8b}}.$$ For $a=b$,
by using~\eqref{eq16}, we arrive at the following expression for the ruin
probability
$$\mathrm{P}\left\{\tilde{\tau }^+(u)<\infty\right\}=1
-\frac{1+\frac{49}{426}e^{-95 u/3}- \frac{171}{355}e^{-8
u}}{1-\frac{45}{111328}e^{-95b/3}+\frac{19}{852}e^{-8b}}.$$
\small


\begin{thebibliography}{99}
%
\bibitem{KarnaukhUMZ}
Karnaukh, E.V. Two-limit problems for almost semicontinuous
processes defined on a Markov chain. (Ukrainian, English)//Ukr. Mat.
Zh. 59, No. 4 (2007), 555-565, translation in Ukr. Math. J. 59, No.
4 (2007), 620-632, arXiv:0909.1420v1[math.PR].
%
\bibitem{KarnaukhTBIMS}
Karnaukh, E.V. Overshoot functionals for almost semicontinuous
processes defined on a Markov chain // Teor. Imovir. ta Matem.
Statyst., No.76 (2007), 45-52, translation in Theor. Probability and
Math. Statist. No. 76 (2008), 49-57, arXiv:0909.3690v1[mathPR].
%
\bibitem{Gusak2007}
Gusak~D.V., \textit{Boundary value problems for
processes with independent increments in the risk theory}[in Ukrainian],
Institute of Mathematics,
Ukrainian National Academy of Sciences, Kyiv, 2007.
%
\bibitem{Asmussen}
Asmussen~S., \textit{Ruin Probabilities},
 World Scientific,
 Singapore,
 2000\href{http://www.worldscibooks.com/mathematics/2779.html}{.}
%
\bibitem{Jasiulewicz}
Jasiulewicz H., \textit{Probability of ruin with variable premium
rate in a Markovian environment}, Insurance: Mathematics and
Economics \textbf{29} (2001),
291--296\href{http://www.sciencedirect.com/science/article/pii/S0167668701000907}{.}
%
\bibitem{Bratiychuk}
Bratiychuk M.S., Derfla D., \textit{On a modification of the
classical risk process}, Insurance: Mathematics and Economics
\textbf{41} (2007), 156--162\href{http://www.sciencedirect.com/science/article/pii/S0167668706001569}{.}
%
\bibitem{Rolsky}
Rolsky~T., Shmidly~H., Shmidt~V., Teugels~J., \textit{Stochastic
Processes for Insurance and Finance}, Wiley, New York,
1999\href{http://onlinelibrary.wiley.com/book/10.1002/9780470317044}{.}
%
\bibitem{Gusak}
Gusak D.V., \textit{ Boundary problems for processes with
independent increments on Markov chains and for
semi-Markov processes},  Institute of Mathematics, Ukrainian National Academy
of Sciences, Kyiv, 1998.
%
\bibitem{Gerber}
Gerber H.U., Shiu S.W., \textit{On the time value of ruin}, North
American Actuarial Journal \textbf{2} (1998), N~1, 48--78\href{http://www.soa.org/library/journals/north-american-actuarial-journal/1998/january/naaj9801_3.pdf}{.}
%
\bibitem{GerberI}
Gerber H.U., Shiu S.W., \textit{From ruin theory to pricing reset
guarantees and perpetual put optoins}, Insurance: Mathematics and
Economics \textbf{24} (1999),
3--14\href{http://www.sciencedirect.com/science/article/pii/S016766879800033X}{.}
\end{thebibliography}
\end{document}